%

\documentclass[aop,MSNbibl,dvips]{arximspdf}
\usepackage{mathrsfs}

\doi{10.1214/10-AOP613}
\volume{40}
\issue{1}
\pubyear{2012}
\firstpage{1}
\lastpage{18}

\makeatletter
\newcommand{\fraca}[2]{{#1/#2}}
\newcommand{\fracb}[2]{{(#1)/#2}}
\newcommand{\fracc}[2]{{#1/(#2)}}
\newcommand{\fracd}[2]{{(#1/#2)}}
\newcommand{\frace}[2]{{(#1)/(#2)}}
\newcommand{\eqref}[1]{(\ref{#1})}
\newcommand{\iint}{\int\!\!\int}
\renewcommand{\epsilon}{\varepsilon}
\newtheorem{theorem}{Theorem}
\newtheorem{lemma}{Lemma}
\newtheorem{proposition}{Proposition}
\newproclaim{remark}{Remark}
\newproclaim{example}{Example}
\makeatother

\begin{document}
\begin{frontmatter}

\title{Dimension result and KPZ formula for two-dimensional multiplicative cascade processes}
\runtitle{Dimension results for multiplicative cascade process}

\begin{aug}
\author{\fnms{Xiong} \snm{Jin}\corref{}\ead[label=e1]{xiongjin82@gmail.com}}
\runauthor{X. Jin}
\affiliation{CNRS \& Universit\'{e} Paris XIII}
\address{LAGA\\
Institut Galil\'{e}e\\
Universit\'{e} Paris 13\\
93430 Villetaneuse\\
France\\
\printead{e1}} 
\end{aug}

\received{\smonth{9} \syear{2010}}
\revised{\smonth{9} \syear{2010}}

%
\begin{abstract}
We prove a Hausdorff dimension result for the image of two-dimensional
multiplicative cascade processes, and we obtain from this result a
KPZ-type formula which normally has one point of phase transition.
\end{abstract}

%
\begin{keyword}[class=AMS]
\kwd[Primary ]{60G18}
\kwd{60G57}
\kwd[; secondary ]{28A78}
\kwd{28A80}.
\end{keyword}
\begin{keyword}
\kwd{Hausdorff dimension}
\kwd{image of stochastic process}
\kwd{KPZ formula}
\kwd{multiplicative cascade}.
\end{keyword}

\end{frontmatter}

\section{Introduction}\label{intro}

The famous Knizhnik--Polyakov--Zamolodchikov formu\-la in quantum gravity
relates the conformal dimension $\Delta^0$ of any field operator of a
two-dimensional conformal field theory to the analogous dimension~$\Delta$
of the same operator when the theory is coupled to a
two-dimensional quantum gravity,
%
\begin{equation}\label{KPZ}
\Delta^0=\Delta+\frac{\gamma^2}{4}\Delta(\Delta-1)  \qquad \mbox{for } \gamma
=\sqrt{\frac{25-c}{6}}-\sqrt{\frac{1-c}{6}},
\end{equation}
where $c$ is the central charge of the conformal field theory. This
formula was first derived by Knizhnik, Polyakov and Zamolodchikov \cite
{KnPoZa88} in 1988 via Liouville quantum gravity in a light cone gauge,
building on a earlier work of Polyakov~\cite{Po87} in 1987. Shortly
after, David~\cite{Da88} provided an alternative heuristic derivation
of the KPZ formula by using Liouville field theory in the so-called
conformal gauge. The KPZ formula has a great influence on string theory
and conformal field theory, and it plays a core role in studying the
connections of two-dimensional quantum gravity to random planar maps,
two-dimensional lattice models, random matrix theory and
Schramm--Loewner evolution.

In a recent inspiring paper~\cite{DuSh08} Duplantier and Sheffield
provide (in a~mathematically rigorous way) a geometrical KPZ formula
under a similar framework as used in~\cite{Da88}. They relate the
Euclidean scaling exponent $x$ of a fractal subset of the domain
$D$\vadjust{\goodbreak}
(with respect to the Lebesgue measure) to the quantum scaling exponent
$\Delta$ of the same set (with respect to the Liouville quantum gravity
measure, that is, roughly speaking, the measure $e^{\gamma h}\,dz$ with~$h$
being the Gaussian free field on $D$). By using large deviation
estimates they prove that $x$ and $\Delta$ satisfy the same formula as
\eqref{KPZ} (replacing $\Delta^0$ by~$x$) for $\gamma\in[0,2)$.

Inspired by Duplantier and Sheffield's work, Benjamini and Schramm~\cite
{BeSc09} prove a Hausdorff dimension version of the geometrical KPZ
formula for random metrics built from Mandelbrot measures constructed
in~\cite{KaPe76}. Adapting Benjamini and Schramm's proof, Rhodes and
Vargas~\cite{RhVa08} prove a~similar result for one-dimensional
log-infinite divisible multifractal measures constructed in \cite
{BacMu03} and two-dimensional Gaussian multiplicative chaos constructed
in~\cite{Ka85,RoVa10} (like the Liouville quantum gravity measure
constructed in~\cite{DuSh08}). It is also worth mentioning that
following~\cite{DuSh08} there is the paper of David and Bauer \cite
{DaBa09} that gives a physicist's derivation of the geometrical KPZ
formula via heat kernel methods.

A common feature of the random measures mentioned above (Liouville
quantum gravity measure, Mandelbrot measure, log-infinite divisible
multifractal measure, etc.) is that they are all obtained through a
limiting procedure, and along the procedure the random densities that
are used to construct these measures can always be locally written as a
product of independent weights. These random measures nowadays are
mentioned as multiplicative chaos. The first work on this subject could
be traced back to Kolmogorov~\cite{Ko41} in 1941 regarding the local
structure of turbulence in probabiliy interpretation. The study was
developed by Yaglom~\cite{Ya66} in 1966 (introducing the cascade
structure) and Mandelbrot~\cite{Ma72,Ma74} in the early 70s (refining
the cascade structure and pointing out the necessity of using limiting
procedures). Then in 1976 Kahane and Peyri\`{e}re~\cite{KaPe76}
completed the work in~\cite{Ma74} regarding Mandelbrot measures, and
introduced several fundamental ideas for the study of multiplicative
chaos. Later in 1985 Kahane~\cite{Ka85} defined rigorously the Gaussian
multiplicative chaos suggested by Mandelbrot in~\cite{Ma72}; in
particular his theory gives a rigorous definition of the measure
$e^{\gamma h}\,dz$ where $h$ is the Gaussian free field. For more
details on this subject one can see, for example, the survey paper \cite
{BaMa04}.

Of special interest to this paper, we would like to present here more
precisely Benjamini and Schramm's result on the geometrical KPZ formula
for Mandelbrot measures: let $W$ be a positive random variable of
expectation $1/2$, and let $\{W(w)\dvtx  w\in\bigcup_{n\geq1} \{0,1\}
^n\}$ be a sequence of independent copies of $W$ encoded by the dyadic
words. The Mandelbrot measure $\mu$ on $[0,1]$ generated by $W$ is
defined as the weak limit of
\[
 \bigl(d\mu_n(x) = 2^n\cdot W(x|_1)W(x|_2)\cdots W(x|_n)\, dx
 \bigr)_{n \geq1},
\]
where for $i=1, 2,\ldots ,$ and $x\in[0,1]$, $x|_i$ stands for the first
$i$ letters of the dyadic expansion of $x$. From~\cite{KaPe76,Ka87} one
knows that if $\mathbb{E}(W\log W)<0$, then~$\mu$ is almost surely
nondegenerate and without atom, so it induces a random metric $\rho_\mu
$ on $[0,1]$ given by $\rho_\mu(x,y)=\mu([x,y])$ for $0\leq
x<y\leq1$ (such a~metric was previously considered in \cite
{Ba99}). Denote by $\dim_H$ the Hausdorff dimension with respect to the
Euclidean metric and by $\dim_H^{\rho_\mu}$ the Hausdorff dimension
with respect to $\rho_\mu$, it is shown in~\cite{BeSc09} that if
$\mathbb{E}(W^{-s})<\infty$ for all $s\in[0,1)$, then for any Borel set
$K\subset[0,1]$ with $\dim_H K=\xi_0$, almost surely $\dim_H^{\rho_\mu
} K$ is equal to a constant $\xi\in[0,1]$ satisfying
%
\begin{equation}\label{posi}
2^{-\xi_0}= \mathbb{E}(W^\xi).
\end{equation}
In the special case when $W=2^{-1}\cdot2^{-\gamma^2/4} e^{\gamma
H/2}$, where $H=\mathcal{N}(0,2\ln2)$ is a~normal random variable and
$\gamma\in[0,2)$ [notice that $\mathbb{E}(W\log W)<0$ is equivalent to
$\gamma< 2$], they obtain a geometrical KPZ formula for $\rho_\mu$,
%
\begin{equation}\label{kpzold}
\xi_0=\xi+\frac{\gamma^2}{4} \cdot\xi(1-\xi).
\end{equation}
To recover \eqref{KPZ} one may let $\Delta^0=1-\xi_0$ and $\Delta=1-\xi
$. Notice that if we consider the indefinite integral of $\mu$, that is
the function $F_\mu(x)=\mu([0,x])$ for $x\in[0,1]$, then by definition
one directly gets
\[
\dim_H^{\rho_\mu} K=\dim_H F_\mu(K).
\]
So Benjamini and Schramm's result can be also understood as a Hausdorff
dimension result for the image of the increasing process $F_\mu$.

The main goal of this paper is to extend Benjamini and Schramm's result
to signed multiplicative cascade processes, a class of random
multifractal functions recently constructed in~\cite{BaJiMa10} as a
natural generalization of~$F_\mu$. These processes are no longer
increasing functions, and their graphs normally have Hausdorff
dimension greater than $1$, so one would naturally expect a formula
that relates sets with dimension smaller than $1$ to sets with
dimension larger than $1$. This remark led us to directly consider the
case of two signed multiplicative cascades simultaneously. Before
stating in more detail the result we need to recall the definition of
two-dimensional multiplicative cascade processes. Let us begin with
some notations on the coding space.\looseness=1

\subsection*{Coding space}

Let $b\geq2$ be an integer, and let $\mathscr{A}=\{0,\ldots ,b-1\}
$ be the alphabet. Let $\mathscr{A}^*=\bigcup_{n\geq0} \mathscr
{A}^n$ (by convention $\mathscr{A}^0=\{\varnothing\}$ the set of empty
word) and $\mathscr{A}^{\mathbb{N}_+}=\{0,\ldots ,b-1\}^{\mathbb{N}_+}$.

The word obtained by concatenation of $u\in\mathscr{A}^*$ and $v\in
\mathscr{A}^*\cup\mathscr{A}^{\mathbb{N}_+}$ is denoted by $u\cdot v$
and sometimes $uv$. If $n\geq1$ and $u=u_1\cdots u_n\in\mathscr
{A}^n$, then for every $1\leq i\leq n$, the word $u_1\cdots
u_i$ is denoted by $u|_i$, and if $i=0$ then $u|_0$ stands for
$\varnothing$. Also, for any infinite word $v=v_1 v_2\cdots\in\mathscr
{A}^{\mathbb{N}_+}$ and $n\geq1$, $v|_n$ denotes the word
$v_1\cdots v_n$ and $v|_0$ the empty word.

The length of a word $w$ is denoted by $|w|=n$ if $w\in\mathscr{A}^n$
and $|w|=\infty$ if $w\in\mathscr{A}^{\mathbb{N}_+}$. Let $\pi\dvtx  w\in
\mathscr{A}^*\cup\mathscr{A}^{\mathbb{N}_+} \mapsto\sum_{i=1}^{|w|}
w_i\cdot b^{-i}$ be the canonical projection from $\mathscr{A}^*\cup
\mathscr{A}^{\mathbb{N}_+}$ onto the interval $[0,1]$. For $w\in\mathscr
{A}^*$ denote by $I_w=[\pi(w), \pi(w)+b^{-|w|})$ the $b$-adic interval
encoded by $w$.

For $x\in[0,1)$ and $n\geq1$, let $x|_n=x_1\cdots x_n$ be the
unique element of $\mathscr{A}^n$ such that $x \in I_{x_1\cdots x_n}$,
as well as $1|_n=b-1\cdots b-1$.

\subsection*{Two-dimensional multiplicative cascade processes}

Let $(\Omega,\mathcal{A},\mathbb{P})$ be the probability space, and let
$W=(W_1, W_2)$ be a random vector satisfying:
\begin{longlist}[(A2)]
\item[(A0)] $\mathbb{E}(W_1)=\mathbb{E}(W_2)=b^{-1}$;
\item[(A1)] $\exists q\in(1,2]$ such that $\mathbb
{E}(|W_1|^{q})\vee\mathbb{E}(|W_2|^q)<b^{-1}$;
\item[(A2)] $\exists s>2$ such that $\mathbb{E}(|W_1|^{-s})\vee
\mathbb{E}(|W_2|^{-s})<\infty$.
\end{longlist}

Let $\{W(w)\dvtx  w\in\mathscr{A}^*\}$ be a sequence of independent copies
of $W$.

For $k\in\{1,2\}$, $x\in[0,1]$ and $n\geq1$ define the product
\[
Q_k(x|_n)=Q_k(I_{x|_n})=W_k(x|_1)\cdot W_k(x|_2)\cdots W_k(x|_n).
\]

For $k\in\{1,2\}$ and $n\geq1$ define the random piecewise linear function
\[
F_{k,n}\dvtx  t\in[0,1]\mapsto\int_0^t b^n\cdot Q_{k}(x|_n)\, dx.
\]
From~\cite{BaJiMa10} one has almost surely $F_{k,n}$ converges
uniformly to a limit $F_k$. The two-dimensional multiplicative cascade
process considered in this paper is defined as
\[
F\dvtx  t\in[0,1] \mapsto(F_1(t), F_2(t))\in\mathbb{R}^2.
\]
Notice that if $\mathbb{P}(W_1=W_2)=1$, then almost surely $F_1=F_2$,
thus $F$ degenerates to a one-dimensional multiplicative cascade process.

\subsection*{Main result}

Given $\xi_0\in[0,1]$, denote by $\xi$ the smallest solution of the equation
%
\begin{equation}\label{xi}
b^{-\xi_0}= \mathbb{E}(|W_1|^\xi) \vee\mathbb{E}(|W_2|^\xi)
\end{equation}
and $\zeta$ the smallest solution of the equation
%
\begin{equation}\label{zeta}
b^{-\xi_0}=\mathbb{E}(|W_1|^{\zeta-1}\cdot|W_2|) \vee\mathbb
{E}(|W_1|\cdot|W_2|^{\zeta-1}).
\end{equation}
Also denote by
\[
\xi_*=-\log_b \bigl(\mathbb{E}(|W_1|) \vee\mathbb{E}(|W_2|) \bigr).
\]
From assumptions (A0) and (A1) one can easily deduce that $\xi_*\in(1/2,1]$.

\begin{theorem}\label{thm}
Let $K\subset[0,1]$ be any Borel set with $\dim_H K=\xi_0$.
\begin{longlist}[(ii)]
\item[(i)] If $\mathbb{P}(W_1= W_2)<1$, then almost surely
\[
\dim_H F(K)=\xi\wedge\zeta=
\cases{\displaystyle
\xi,  &\quad  if  $\xi_0\in[0,\xi_*] $;\cr\displaystyle
\zeta,  &\quad  if  $\xi_0\in(\xi_*, 1]$.
}\vadjust{\goodbreak}
\]
\item[(ii)] If $\mathbb{P}(W_1= W_2)=1$, then almost surely
\[
\dim_H F(K)=\xi\wedge1 =
\cases{\displaystyle
\xi,  &\quad  if  $\xi_0\in[0,\xi_*]$;  \cr\displaystyle
1,  &\quad  if  $\xi_0\in(\xi_*, 1]$.
}
\]
\end{longlist}
\end{theorem}

Let us give two examples to help understand the result.

\begin{example}\label{example1}   Let $X_1$ and $X_2$ be two random
variables both taking values $b^{-\alpha}$ and $-b^{-\alpha}$ with
respective probabilities $(1+b^{\alpha-1})/2$ and $(1-b^{\alpha-1})/2$
for some $\alpha\leq1$. Suppose that $\mathbb{P}(X_1= X_2)<1$.
Let $\gamma\geq0$ and let $H=\mathcal{N}(0,2\ln b)$ be a normal
random variable independent of $X_1$ and $X_2$. Define
\[
W_1= X_1\cdot b^{-\gamma^2/4} e^{\gamma H/2}  \quad \mbox{and} \quad  W_2=X_2\cdot
b^{-\gamma^2/4} e^{\gamma H/2}.
\]
By simple calculation one has for $\{k,l\}=\{1,2\}$,
\[
\mathbb{E}(|W_k|^\xi)=\mathbb{E}(|W_l|^\xi)=\mathbb{E} (|W_k|^{\xi
-1}\cdot|W_l| )=b^{-\xi\alpha}\cdot b^{-\xi(1-\xi)\gamma^2/4}.
\]
Then assumption (A1) [(A0) and (A2) are automatically satisfied] is
equivalent to requiring
%
\begin{equation}\label{require}
\gamma<2  \quad \mbox{and} \quad
\cases{\displaystyle
 \gamma-\gamma^2/4 < \alpha\leq1,  &\quad  if
$\gamma\geq1$;  \cr\displaystyle
\gamma^2/4+1/2<\alpha\leq1,  &\quad  if  $\gamma<1$.
}
\end{equation}
In such a case, Theorem~\ref{thm} says that for any Borel set $K\subset
[0,1]$ with $\dim_H K=\xi_0$, almost surely $\dim_H F(K)$ is equal to a
constant $\xi\in[0,2)$ satisfying
\[
\xi_0=\alpha\cdot\xi+\frac{\gamma^2}{4} \cdot\xi(1-\xi).
\]
Comparing to \eqref{kpzold}, this formula has a new parameter $\alpha$
varying in the region given by \eqref{require}, and when $\alpha<1$,
the maximal dimension $\dim_H F([0,1])$ is equal to
\[
\frac{\gamma^2+4\alpha-\sqrt{(\gamma^2+4\alpha)^2-16\gamma^2}}{2\gamma
^2} \in(1,2)
\]
if $\gamma>0$ and is equal to $1/\alpha\in(1,2)$ if $\gamma=0$.
\end{example}

\begin{example}\label{example2}   Now let
\[
W_1= X_1\cdot b^{-\gamma^2/4} e^{\gamma H/2}  \quad \mbox{and} \quad  W_2=
b^{-1}\cdot b^{-\gamma^2/4} e^{\gamma H/2} ,
\]
so $W_2$ is almost surely positive. For $\xi\geq0$ one has
\[
\mathbb{E}(|W_1|^\xi)\vee\mathbb{E}(|W_2|^\xi)=b^{-\xi\alpha}\cdot
b^{-\xi(1-\xi)\gamma^2/4},
\]
and for $\zeta\geq1$ one has
\[
\mathbb{E} (|W_1|^{\zeta-1}|W_2| )\vee\mathbb{E} (|W_2|^{\zeta
-1}|W_1| )= b^{-(\zeta-1+\alpha)}\cdot b^{-\zeta(1-\zeta)\gamma^2/4}
\]
as well as $\xi_*=\alpha$. We need the same condition as in \eqref
{require}. In this case, since $F_2$ is almost surely increasing, one
can deduce a random metric $\rho_F$ from $F$ on $[0,1]$ given by $\rho
_F(x,y)=|F(x)-F(y)|$ for $x,y\in[0,1]$. Then Theorem~\ref{thm} says
that for any Borel set $K\subset[0,1]$ with $\dim_H K=\xi_0$, almost
surely $\dim_H^{\rho_F} K$ is equal to a constant $\xi\in[0,2)$ satisfying
\[
\cases{\displaystyle
  \xi_0=\alpha\cdot\xi+\frac{\gamma^2}{4} \cdot\xi(1-\xi)   ,&\quad
 if  $\xi_0\in[0,\alpha]$; \cr\displaystyle
  \xi_0=\xi-1+\alpha+\frac{\gamma^2}{4}\cdot\xi(1-\xi)   ,&\quad  if
$\xi_0\in(\alpha,1]$.
}
\]
If $\alpha=1$, then we go back to \eqref{kpzold}. If $\alpha<1$, then
this KPZ-type formula has a phase transition at $\alpha$, and the
maximal dimension $\dim_H^{\rho_F}[0,1]$ is equal to
\[
\frac{\gamma^2+4-\sqrt{(\gamma^2+4)^2-16\gamma^2(2-\alpha)}}{2\gamma^2}
\in(1,2)
\]
if $\gamma>0$ and is equal to $2-\alpha\in(1,3/2)$ if $\gamma=0$.
\end{example}

\begin{remark}\label{reason}
The reason why we consider the two-dimensional case can be easily seen
from Theorem~\ref{thm} and Examples~\ref{example1},~\ref{example2}. If
we only consider the one-dimensional case, as already shown in Theorem
\ref{thm}(ii), the formula will also have a phase transition at $\xi
_*$, but such a phase transition is indeed caused by the limitation of
the image space.
\end{remark}

\begin{remark}
Examples~\ref{example1} and~\ref{example2} are special cases of Theorem
\ref{thm}. In general, the theorem could provide us with more colorful
formulas. In principle, the formula can have as many points of phase
transition as we want.
\end{remark}

\begin{remark}
Finding the Hausdorff dimension of the image of a stochastic process
restricted to any Borel set is a classical problem in probability
theory. The first work on this subject could be traced back to L\'{e}vy
\cite{Levy53} and Taylor~\cite{Taylor53} in 1953, regarding the
Hausdorff dimension and Hausdorff measure of the image of Brownian
motion. Since then much progress has been made for fractional Brownian
motion, stable L\'{e}vy process and many other processes. We refer to
the survey paper~\cite{Xiao04} and the references therein for more
information on this subject.
\end{remark}

The proof of Theorem~\ref{thm} will be given in the next section. We
end  this section with some preliminaries.

\subsection*{Hausdorff dimension}

If $(X,\rho)$ is a locally compact metric space, for $d\geq0$,
$\delta>0$ and $K\subset X$ let
\[
\mathcal{H}^{\rho,d}_\delta(K)=\inf \biggl\{\sum_{i\in I} |U_i|_\rho^d
\biggr\},
\]
where the infimum is taken over the set of all the at most countable
coverings $\{U_i\}_{i\in I}$ of $K$ such that $0\leq|U_i|_\rho
\leq\delta$, where $|U_i|_\rho$ stands for the diameter of~$U_i$
with respect to $\rho$. Define
\[
\mathcal{H}^{\rho,d}(K)=\lim_{\delta\searrow0} \mathcal{H}^{\rho
,d}_\delta(K).
\]
Then $\mathcal{H}^{\rho,d}(K)$ is called the $d$-dimensional Hausdorff
measure of $K$ with respect to $\rho$, and the Hausdorff dimension of
$K$ with respect to $\rho$ is the number
\[
\dim_H^\rho K=\inf\{d\dvtx \mathcal{H}^{\rho,d}(K)<\infty\}.
\]
When $\rho$ is the standard Euclidean metric, we often omit the index
$\rho$.

\subsection*{Stationary self-similarity of multiplicative cascade processes}

$\!\!\!$For $k\in\{1,2\}$, $w\in\mathscr{A}^*$ and $n\geq1$ define
\[
F_{k,n}^{[w]}\dvtx  t\in[0,1]\mapsto\int_0^t b^n\cdot W_{k}(w\cdot
x|_1)\cdots W_k(w\cdot x|_n)\, dx.
\]
Since $\mathscr{A}^*$ is countable, we have almost surely for\vspace*{-2pt} all $w\in
\mathscr{A}^*$, $F_{k,n}^{[w]}$ converges uniformly to a limit
$F_k^{[w]}$ and $F_k^{[w]}$ has the same law as $F_k$.

By construction for any $w\in\mathscr{A}^*$ and $t\in[0,1]$ one has
%
\begin{equation}\label{selfsimilar}
F_k\bigl(\pi(w)+t\cdot b^{-|w|}\bigr)-F_k(\pi(w))=Q_k(w)\cdot F^{[w]}_k(t).
\end{equation}

For $w\in\mathscr{A}^*$ define
\[
Z_k(w)=F_k^{[w]}(1)
\]
and
\[
X_k(w)=\sup_{s,t\in[0,1]} \bigl|F^{[w]}_k(s)-F^{[w]}_k(t)\bigr|.
\]
Then from \eqref{selfsimilar} one has
\[
F_k\bigl(\pi(w)+b^{-|w|}\bigr)-F_k(\pi(w))=Q_k(w)\cdot Z_k(w)
\]
and
\begin{eqnarray*}
O_k(w)=O_k(I_w):=\sup_{s,t\in I_w} |F_k(s)-F_k(t)| = |Q_k(w)| \cdot X_k(w),
\end{eqnarray*}
where $Q_k(w)$ is independent of $Z_k(w)$ and $X_k(w)$.

We will use the convention that $Z_k=Z_k(\varnothing)$ and
$X_k=X_k(\varnothing)$.

By direct calculation, for any $q_1,q_2\in\mathbb{R}$ and $w\in\mathscr
{A}_*$ one has
\[
\mathbb{E} (O_1(w)^{q_1}O_2(w)^{q_2} ) = \mathbb{E}
(|W_1|^{q_1}|W_2|^{q_2} )^{|w|} \cdot\mathbb{E}(X_1^{q_1}X_2^{q_2}),
\]
whenever the expectation exists.

\subsection*{Moments control}

It is proved in~\cite{BaJiMa10} that for $k\in\{1,2\}$:
%
\begin{eqnarray}\label{momcon}
\mbox{(i)}&&\hspace*{6pt} \mbox{If } \mathbb{E}(|W_k|^q)<b^{-1} \mbox{ for some }
q>1, \mbox{ then } \mathbb{E}(X_k^q)<\infty; \nonumber
\\[-8pt]
\\[-8pt]
\mbox{(ii)}&&\hspace*{6pt}  \mbox{If } \mathbb{E}(|W_k|^{-s})<\infty\mbox{ for some } s>0,
\mbox{ then } \mathbb{E}(X_k^{-s})<\infty.
\nonumber
\end{eqnarray}

\section{\texorpdfstring{Proof of Theorem \protect\ref{thm}}{Proof of Theorem 1}}\label{proof}

\subsection{Upper bound estimate}\label{uppergene}

For $p\geq0$ let
\[
\phi(p) = \mathbb{E} (|W_{1}|^p ) \vee\mathbb{E}
(|W_{2}|^p )
\]
and
\[
\widetilde{\phi}(p) = \mathbb{E} (|W_{1}|^{p-1}\cdot|W_2| ) \vee
\mathbb{E} (|W_1|\cdot|W_{2}|^{p-1} ).
\]

We have the following lemma:
\begin{lemma}\label{lemma1}
One has $\phi(p) \leq\widetilde{\phi}(p)$ if $p\in[0,1]$ and $\phi
(p)\geq\widetilde{\phi}(p)$ if $p \geq1$.
\end{lemma}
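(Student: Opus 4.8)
The plan is to reduce the two claimed inequalities to scalar comparisons between the individual expectations making up $\phi(p)$ and $\widetilde{\phi}(p)$, and to dispatch each comparison by a single application of H\"older's inequality, applied in two opposite directions according to whether $p\ge 1$ or $p\in[0,1]$. Since every quantity may be read as an element of $[0,+\infty]$, H\"older's inequality applies with no further integrability assumptions, and the reductions below hold in full generality; the only elementary fact I will invoke repeatedly is that a weighted geometric mean of two numbers, with weights summing to $1$, is bounded above by their maximum.

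First I would treat the case $p\ge 1$, where the goal is $\widetilde{\phi}(p)\le\phi(p)$. Fixing the term $\mathbb{E}(|W_1|^{p-1}|W_2|)$ and applying H\"older's inequality with conjugate exponents $p/(p-1)$ and $p$ gives
\[
\mathbb{E}\big(|W_1|^{p-1}|W_2|\big)\le \mathbb{E}\big(|W_1|^{p}\big)^{(p-1)/p}\cdot \mathbb{E}\big(|W_2|^{p}\big)^{1/p}.
\]
The right-hand side is a weighted geometric mean of $\mathbb{E}(|W_1|^p)$ and $\mathbb{E}(|W_2|^p)$, hence is bounded above by their maximum $\phi(p)$. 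The companion term $\mathbb{E}(|W_1||W_2|^{p-1})$ is controlled in exactly the same way by exchanging the roles of $W_1$ and $W_2$, and taking the maximum of the two bounds yields $\widetilde{\phi}(p)\le\phi(p)$; the boundary value $p=1$ is a trivial equality.

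The reverse inequality $\phi(p)\le\widetilde{\phi}(p)$ for $p\in[0,1]$ is the step I expect to require the most care, since now H\"older must be run in the opposite direction: I first have to express $|W_1|^p$ as a product of the two cross-terms. I would look for nonnegative weights $a,b$ with $a+b=1$ such that $(|W_1|^{p-1}|W_2|)^a(|W_1||W_2|^{p-1})^b=|W_1|^p$; matching the exponents of $|W_1|$ and of $|W_2|$ forces $a=(1-p)/(2-p)$ and $b=1/(2-p)$, which are indeed nonnegative precisely because $p\in[0,1]$. Applying H\"older's inequality with exponents $1/a$ and $1/b$ then gives
\[
\mathbb{E}\big(|W_1|^{p}\big)\le \mathbb{E}\big(|W_1|^{p-1}|W_2|\big)^{a}\cdot \mathbb{E}\big(|W_1||W_2|^{p-1}\big)^{b}\le \widetilde{\phi}(p),
\]
the final step again bounding a weighted geometric mean by the maximum. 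Repeating the computation with $W_1$ and $W_2$ interchanged gives $\mathbb{E}(|W_2|^p)\le\widetilde{\phi}(p)$, and taking the maximum of the two yields $\phi(p)\le\widetilde{\phi}(p)$, which completes the plan.
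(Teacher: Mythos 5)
Your proposal is correct, and while the $p\ge 1$ half coincides exactly with the paper's argument (the same H\"older application with conjugate exponents $p/(p-1)$ and $p$, then bounding the weighted geometric mean by $\phi(p)$), your treatment of $p\in[0,1]$ takes a genuinely different route. The paper inserts the factor $|W_2|^{-p(1-p)}\cdot|W_2|^{p(1-p)}$ and applies H\"older to obtain the coupled pair $\mathbb{E}(|W_1|^p)\le\widetilde{\phi}(p)^p\,\mathbb{E}(|W_2|^p)^{1-p}$ and $\mathbb{E}(|W_2|^p)\le\widetilde{\phi}(p)^p\,\mathbb{E}(|W_1|^p)^{1-p}$, then bootstraps by substituting one into the other and cancelling the factor $\mathbb{E}(|W_1|^p)^{(1-p)^2}$; it also has to treat $p=0$ separately via $\frac{|W_1|}{|W_2|}+\frac{|W_2|}{|W_1|}\ge 2$. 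You instead solve for weights so that $|W_1|^p$ is \emph{exactly} the $(a,b)$-geometric mean of the two cross terms, and your bookkeeping is right: with $a=(1-p)/(2-p)$, $b=1/(2-p)$ one has $a(p-1)+b=p$, $a+b(p-1)=0$, $a+b=1$, and $a,b\ge 0$ precisely because $p\le 1$, so a single H\"older application finishes. Your version buys two things: the endpoint $p=0$ is covered uniformly (your weights become $a=b=1/2$, recovering the paper's AM--GM observation as Cauchy--Schwarz), and you avoid the cancellation step, which implicitly requires $0<\mathbb{E}(|W_1|^p)<\infty$ (true here by (A2) and Jensen for $p\le 1$, but unstated in the paper). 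Both proofs share the same mild caveat, which you gesture at with the $[0,+\infty]$ convention: the pointwise factorizations involving negative powers only make literal sense when $0<|W_k|<\infty$ almost surely, which assumption (A2) guarantees.
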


\begin{pf}
Obviously $\phi(1)=\widetilde{\phi}(1)$.

Since $\frac{|W_1|}{|W_2|}+\frac{|W_2|}{|W_1|}\geq2$, we get\vspace*{2pt}
$\widetilde{\phi}(0)\geq1 = \phi(0)$.

Let $\{k,l\}=\{1,2\}$. For $p>1$ from H\"{o}lder's inequality one gets
\begin{eqnarray*}
\mathbb{E} (|W_{k}|^{p-1}|W_{l}| ) &\leq& \mathbb{E}
\bigl(|W_{k}|^{(p-1)\cdot\fracc{p}{p-1}} \bigr)^{\fracb{p-1}{p}} \cdot\mathbb
{E} (|W_{l}|^p )^{\fraca{1}{p}} \\
&=& \mathbb{E} (|W_{k}|^{p} )^{\fracb{p-1}{p}} \cdot\mathbb{E}
(|W_{l}|^p )^{\fraca{1}{p}} \\
&\leq& \phi(p).
\end{eqnarray*}
This implies $\widetilde{\phi}(p)\leq\phi(p)$. For $p\in(0,1)$
from H\"{o}lder's inequality one gets
\begin{eqnarray*}
\mathbb{E} (|W_{k}|^{p} ) &=& \mathbb{E}\bigl (|W_{k}|^{p} \cdot
|W_l|^{-p(1-p)} \cdot|W_l|^{p(1-p)}  \bigr) \\
& \leq& \mathbb{E} \bigl( \bigl(|W_{k}|^p \cdot|W_l|^{-p(1-p)}
\bigr)^{\fraca{1}{p}}  \bigr)^p\cdot\mathbb{E} \bigl(  \bigl(|W_l|^{p(1-p)}
\bigr)^{\fracc{1}{1-p}}  \bigr)^{1-p}\\
& = & \mathbb{E} (|W_k|\cdot|W_l|^{p-1} )^p\cdot\mathbb{E}
(|W_l|^p )^{1-p}\\
& \leq& \widetilde{\phi}(p)^p \cdot\mathbb{E} (|W_l|^p )^{1-p}.
\end{eqnarray*}
In an analogous way one can also obtain
\[
\mathbb{E} (|W_{l}|^p ) \leq\widetilde{\phi}(p)^p\cdot
\mathbb{E} (|W_k|^p )^{1-p}.
\]
Then
\[
\mathbb{E} (|W_{k}|^p ) \leq\widetilde{\phi}(p)^p\cdot
\widetilde{\phi}(p)^{p(1-p)} \cdot\mathbb{E} (|W_k|^p )^{(1-p)(1-p)},
\]
which implies $\mathbb{E} (|W_{k}|^p )\leq\widetilde{\phi
}(p)$, thus $\phi(p) \leq\widetilde{\phi}(p)$.
\end{pf}

Given $\xi_0\in[0,1]$ recall the definition of $\xi$ and $\zeta$ in
\eqref{xi} and \eqref{zeta}.

Notice that under assumptions (A0) and (A1), Lemma~\ref{lemma1} ensures
that, by the convexity of $\phi$\vadjust{\goodbreak} and $\widetilde{\phi}$, $\phi$ and
$\widetilde{\phi}$ are non increasing on $\widetilde{\phi
}^{-1}([0,1])$. This implies that $\xi\leq\zeta\leq1$ if
$\xi_0\in[0,\xi_*]$ and $\xi\geq\zeta> 1$ if $\xi_0\in(\xi
_*,1]$, as well as $\phi'(\xi+) \leq0$ and $\widetilde{\phi
}'(\zeta+)\leq0$. Thus given any $\epsilon>0$ small enough, one
can find an $\eta>0$ such that
\[
\phi(\xi+\eta) \leq b^{-(\xi_0+\epsilon)}  \quad \mbox{and} \quad  \widetilde
{\phi}(\zeta+\eta) \leq b^{-(\xi_0+\epsilon)}.
\]
From the moments control \eqref{momcon} it is easy to deduce that
\[
\mathbb{E}(X_1^{\xi+\eta})\vee\mathbb{E}(X_2^{\xi+\eta}) <\infty
\]
as well as for $\{k,l\}=\{1,2\}$ and $\zeta>1$,
\[
\mathbb{E}(X_k^{\zeta+\eta-1}X_l) \leq\mathbb{E}(X_k^{\zeta+\eta
})^{\frace{\zeta+\eta-1}{\zeta+\eta}}\cdot\mathbb{E}(X_l^{\zeta+\eta
})^{\fracc{1}{\zeta+\eta}} <\infty.
\]

From the definition of Hausdorff dimension, for each $n\geq1$ one
can find a~sequence $\mathcal{I}_n$ of $b$-adic intervals such that
\[
K\subset\bigcup_{I\in\mathcal{I}_n} I  \quad \mbox{and} \quad  \sum_{I\in\mathcal
{I}_n} |I|^{\xi_0+\epsilon} \leq2^{-n}.
\]
Let $\delta_n=\sup_{I\in\mathcal{I}_n} |F(I)|$. Since $F$ is almost
surely continuous, $\delta_n \to0$ almost surely. For any interval
$I\in\mathcal{I}_n$ denote by
\[
O_*(I)= O_1(I) \wedge O_2(I)  \quad \mbox{and} \quad  O^*(I)=O_{1}(I) \vee O_2(I).
\]
Then we can obtain the desired upper bounds from the following two facts:
\begin{longlist}[(ii)]
\item[(i)] If $\xi_0\in[0,\xi_*]$: for each $I\subset\mathcal{I}_n$
one can use a single square of side length $2O^*(I)$ to cover $F(I)$, thus
\begin{eqnarray*}
\mathbb{E} (\mathcal{H}^{\xi+\eta}_{\delta_n}( F(K)))
& \leq& 2^{\xi+\eta}\mathbb{E} \biggl(\sum_{I\in\mathcal{I}_n}
O_1(I)^{\xi+\eta} \vee O_2(I)^{\xi+\eta}  \biggr) \\
& \leq& 2^{\xi+\eta}\sum_{I\in\mathcal{I}_n} \mathbb{E}\bigl(
O_1(I)^{\xi+\eta}+O_2(I)^{\xi+\eta}\bigr)\\
& \leq& C \cdot\sum_{I\in\mathcal{I}_n} |I|^{\xi_0+\epsilon} \\
&\leq& C \cdot2^{-n},
\end{eqnarray*}
where $C=2^{\xi+\eta+1} \mathbb{E}(X_1^{\xi+\eta}) \vee\mathbb
{E}(X_2^{\xi+\eta})$.
\item[(ii)] If $\xi_0\in(\xi_*,1]$: for each $I\subset\mathcal{I}_n$
one can use no more than $\lfloor O^*(I)/O_*(I) \rfloor$-many squares
of side length $2O_*(I)$ to cover $F(I)$, thus
\begin{eqnarray*}
&&\mathbb{E} (\mathcal{H}^{\zeta+\eta}_{\delta_n}( F(K)) ) \\
&& \qquad \leq 2^{\zeta+\eta} \mathbb{E} \biggl(\sum_{I\in\mathcal{I}_n}
\biggl(\frac{O_2(I)}{O_1(I)}\cdot O_1(I)^{\zeta+\eta} \biggr)\vee \biggl(\frac
{O_1(I)}{O_2(I)}\cdot O_2(I)^{\zeta+\eta} \biggr)  \biggr) \\
&& \qquad \leq 2^{\zeta+\eta} \sum_{I\in\mathcal{I}_n} \mathbb{E}\bigl(
O_2(I)O_1(I)^{\zeta+\eta-1} +O_1(I)O_2(I)^{\zeta+\eta-1}\bigr) \\
&& \qquad \leq C' \cdot\sum_{I\in\mathcal{I}_n} |I|^{\xi_0+\epsilon} \\
&& \qquad \leq C' \cdot2^{-n},
\end{eqnarray*}
where $C'=2^{\zeta+\eta+1} \mathbb{E}(X_1^{\zeta+\eta-1}X_2) \vee
\mathbb{E}(X_2^{\zeta+\eta-1}X_1) $.
\end{longlist}

\subsection{Lower bound estimate}\label{lb}

We will use a similar method as in~\cite{BeSc09} to estimate the lower
bound. First we consider the case $\mathbb{P}(W_1= W_2)<1$.

There is nothing to prove when $\dim_H K=0$, since $F(K)$ is always
nonempty. Let $\dim_H K=\xi_0>0$. Given any $\delta\in(0,\xi_0)$, due
to Frostman's lemma there exists a Borel probability measure $\mu_0$
carried by $K$ such that
\[
\iint_{s,t\in[0,1]} \frac{d\mu_0(s)\,d\mu_0(t)}{|s-t|^{\xi
_0-\delta}} <\infty.
\]

Let $\{k,l\}=\{1,2\}$ and let $d\in(0,2)$ be the unique number such that
\[
\cases{\displaystyle
\mathbb{E}(|W_{k}|^{d})=b^{-(\xi_0-\delta)},  &\quad  if  $\xi_0\in(0,\xi
_*]$; \vspace*{2pt}\cr\displaystyle
\mathbb{E}(|W_{k}|\cdot|W_l|^{d-1})=b^{-(\xi_0-\delta)},  &\quad  if  $\xi
_0\in(\xi_*,1]$.
}
\]
We may assume that $\delta$ is small enough such that $d>1$ if $\xi
_0\in(\xi_*,1]$, and $d\in(0,1)$ if $\xi_0\in(0,\xi_*]$.

For $w\in\mathscr{A}^*$ let
\[
\widetilde{W}(w)=
\cases{\displaystyle
b^{\xi_0-\delta}\cdot|W_k(w)|^{d},  &\quad  if  $\xi_0\in(0,\xi_*]$; \vspace*{2pt}\cr\displaystyle
b^{\xi_0-\delta}\cdot|W_{k}(w)|\cdot|W_l(w)|^{d-1},  &\quad  if  $\xi
_0\in(\xi_*,1]$
}
\]
and
\[
Q(w)=\widetilde{W}(w|_1) \widetilde{W}(w|_2)\cdots\widetilde{W}(w).
\]

For $n\geq1$ define the random measure $\mu_n$ by
\[
d\mu_n(x)=Q(x|_n) \, d\mu_0(x).
\]
By construction, $(\mu_n)_{n\geq1}$ is a measure-valued
martingale thus yields a weak limit~$\mu$, and $\mu([0,1]\setminus
K)=0$ almost surely.

For $s,t\in[0,1]$ define
%
\begin{equation}\label{kgammale1}
\mathcal{K}_n^d(s,t)= |F_k(s)-F_k(t)|^{d} \vee O_k(s|_n)^{d}
\end{equation}
if $d\in(0,1]$ and
%
\begin{eqnarray}\label{kgammage1}
 \mathcal{K}_n^d(s,t)  &=& \bigl(|F_k(s)-F_k(t)|^2+|F_l(s)-F_l(t)|^2
\bigr)^{\fraca{d}{2}}\nonumber
\\[-8pt]
\\[-8pt]
 &&{}\vee \bigl(O_k(s|_n)^2+O_l(s|_n)^2 \bigr)^{\fraca{d}{2}}
\nonumber
\end{eqnarray}
if $d>1$. Due to the continuity of $F$, one has almost surely $\mathcal
{K}_n^d$ converges uniformly to
\[
\mathcal{K}^d(s,t)=
\cases{\displaystyle
|F_k(s)-F_k(t)|^{d}  ,&\quad  if  $d\in(0,1]$; \vspace*{2pt}\cr\displaystyle
 \bigl(|F_k(s)-F_k(t)|^2+|F_l(s)-F_l(t)|^2 \bigr)^{\fraca{d}{2}}  ,&\quad
 if   $d >1$.
}
\]

We have the following proposition:
\begin{proposition}\label{prop1}
There exists a constant $C$ such that for any $0\leq s <
t\leq1$ and $n\geq1$,
\[
\mathbb{E} \biggl(\frac{d\mu_n(s)\,d\mu_n(t)}{\mathcal{K}_n^d(s,t)}
 \biggr)\leq C \cdot\frac{ d\mu_0(s)\,d\mu_0(t) }{|s-t|^{\xi
_0-\delta}}.
\]
\end{proposition}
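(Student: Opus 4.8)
The plan is to prove the inequality pointwise in $(s,t)$ by using the stationary self-similarity \eqref{selfsimilar} to peel off the common ancestor of $s$ and $t$. Fix $0\le s<t\le 1$, let $m=m(s,t)$ be the length of their longest common prefix $w=s|_m=t|_m$, and write $s=\pi(w)+s'\,b^{-m}$, $t=\pi(w)+t'\,b^{-m}$, so that $s'$ and $t'$ lie in distinct level-one subintervals. Since both points lie in $I_w$ we have the only comparison we will need, $|s-t|\le b^{-m}$, hence $b^{m(\xi_0-\delta)}\le |s-t|^{-(\xi_0-\delta)}$. By \eqref{selfsimilar} the increment factors as $F_k(s)-F_k(t)=Q_k(w)\big(F_k^{[w]}(s')-F_k^{[w]}(t')\big)$, the oscillation factors as $O_k(s|_n)=|Q_k(w)|\,O_k^{[w]}(s'|_{p})$ with $p=n-m$, and the cascade product splits as $Q(s|_n)=Q(w)\,Q^{[w]}_{s}$, where $Q(w)=\widetilde W(w|_1)\cdots\widetilde W(w)$ is the spine product and $Q^{[w]}_{s}=\widetilde W(w\cdot s'|_1)\cdots\widetilde W(w\cdot s'|_{p})$ is the subtree product. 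The definition of $\widetilde W$ gives $Q(w)=b^{m(\xi_0-\delta)}|Q_k(w)|^{\gamma}$ in the range $\xi_0\in(0,\xi_*]$ and $Q(w)=b^{m(\xi_0-\delta)}|Q_k(w)|\,|Q_l(w)|^{\gamma-1}$ in the range $\xi_0\in(\xi_*,1]$, so the spine carries exactly the deterministic factor $b^{m(\xi_0-\delta)}$ together with the weights $Q_k(w),Q_l(w)$.

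Consider first $\xi_0\in(0,\xi_*]$, so $\gamma\in(0,1)$ and $\mathcal{K}_n^\gamma$ is \eqref{kgammale1}. Here the numerator carries $Q(w)^2=b^{2m(\xi_0-\delta)}|Q_k(w)|^{2\gamma}$ while the denominator carries one factor $|Q_k(w)|^{\gamma}$; one power cancels and a single $|Q_k(w)|^{\gamma}$ survives. Conditioning on the spine weights $\{W(w|_j)\}_{1\le j\le m}$, which are independent of the whole subtree rooted at $w$, and using the very choice of $\gamma$ (namely $\mathbb{E}(|W_k|^\gamma)=b^{-(\xi_0-\delta)}$) I obtain $\mathbb{E}(|Q_k(w)|^{\gamma})=\mathbb{E}(|W_k|^{\gamma})^{m}=b^{-m(\xi_0-\delta)}$. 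Together with the $b^{2m(\xi_0-\delta)}$ this leaves precisely the factor $b^{m(\xi_0-\delta)}\le|s-t|^{-(\xi_0-\delta)}$ multiplied by the subtree expectation
\[
\mathbb{E}\Big(\frac{Q^{[w]}_{s}\,Q^{[w]}_{t}}{|F_k^{[w]}(s')-F_k^{[w]}(t')|^{\gamma}\vee O_k^{[w]}(s'|_{p})^{\gamma}}\Big),
\]
in which $s'$ and $t'$ split at the first generation. The entire problem is thus reduced to bounding this subtree expectation by a constant uniform in $n,s,t$.

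The main obstacle is exactly this uniform subtree bound, and it is here that the signed nature of the cascade forces the use of the negative moment assumption (A2). The two branches below the distinct first letters $s'_1\ne t'_1$ are independent and the numerator $Q^{[w]}_{s}Q^{[w]}_{t}$ has expectation $1$ by the martingale normalization $\mathbb{E}(\widetilde W)=1$; the difficulty is the integrability of the reciprocal denominator when $F_k^{[w]}(s')$ and $F_k^{[w]}(t')$ nearly cancel. I would split according to whether the genuine increment $|F_k^{[w]}(s')-F_k^{[w]}(t')|$ dominates the oscillation floor $O_k^{[w]}(s'|_{p})$. On the dominant event the denominator is comparable to a full first-generation increment whose positive $\gamma$-moment is finite by \eqref{momcon}(i) (guaranteed by (A1)), contributing $O(1)$. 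On the complementary near-cancellation event I fall back on $O_k^{[w]}(s'|_{p})^{\gamma}=|Q_k^{[w]}(s'|_{p})|^{\gamma}X_k(s'|_{p})^{\gamma}$, against which $Q^{[w]}_{s}=b^{p(\xi_0-\delta)}|Q_k^{[w]}(s'|_{p})|^{\gamma}$ reduces the ratio to essentially $b^{p(\xi_0-\delta)}X_k^{-\gamma}$ with $\mathbb{E}(X_k^{-\gamma})<\infty$ coming from \eqref{momcon}(ii) and $\gamma<2<s$; the residual growth $b^{p(\xi_0-\delta)}$ is compensated by the decay of the probability that $|F_k^{[w]}(s')-F_k^{[w]}(t')|$ falls below the oscillation scale, a decay furnished by the existence of a bounded density for the increment near the origin, itself a consequence of the negative moments of $W_k$ and $X_k$ granted by (A2). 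Establishing this balance, uniformly in $p$ and including the delicate case of adjacent split cells, is the heart of the argument.

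Finally, for $\xi_0\in(\xi_*,1]$, so $\gamma>1$ and $\mathcal{K}_n^\gamma$ is the Euclidean expression \eqref{kgammage1}, the same scheme applies but the reduction is more delicate because the norm $\big(|F_k(s)-F_k(t)|^2+|F_l(s)-F_l(t)|^2\big)^{\gamma/2}$ mixes the two spine products $Q_k(w)$ and $Q_l(w)$, which no longer factor cleanly out of the denominator. I would therefore keep both spine weights inside the conditional expectation and lower bound the norm by an appropriately weighted combination, using $\widetilde W=b^{\xi_0-\delta}|W_k|\,|W_l|^{\gamma-1}$ together with Lemma \ref{lemma1} and the defining equation \eqref{zeta} of $\zeta$, so that the spine expectation again collapses to $b^{-m(\xi_0-\delta)}$; the subtree term is then controlled by the same bulk versus near-cancellation dichotomy, with $\mathbb{E}(X_k^{\gamma-1}X_l)<\infty$ (obtained by H\"older from $\mathbb{E}(X_k^{\gamma})\vee\mathbb{E}(X_l^{\gamma})<\infty$) replacing the positive-moment bound and (A2) again controlling the cancellation. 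Multiplying the resulting pointwise bound by $\mathrm{d}\mu_0(s)\,\mathrm{d}\mu_0(t)$ and recalling $\mathrm{d}\mu_n(x)=Q(x|_n)\,\mathrm{d}\mu_0(x)$ yields the stated inequality.
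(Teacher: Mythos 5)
Your spine/subtree factorization at the longest common prefix $w=s|_m=t|_m$ is fine as far as it goes (for $\gamma<1$ and $m<n$ the spine expectation does collapse to $b^{m(\xi_0-\delta)}\le |s-t|^{-(\xi_0-\delta)}$), but the target you reduce the problem to is false: the subtree expectation is \emph{not} bounded by a constant uniform in $n,s,t$. If $s$ and $t$ straddle a $b$-adic boundary at a coarse level, then $s'$ and $t'$ lie in distinct level-one cells yet $|s'-t'|$ can be arbitrarily small, and letting $n\to\infty$ (so the oscillation floor $O_k^{[w]}(s'|_p)$ vanishes) the subtree expectation genuinely grows like $|s'-t'|^{-(\xi_0-\delta)}$: the increment across the boundary is governed by the weights along the two boundary spines down to scale $|s'-t'|$, and the same computation that makes your spine term collapse shows the expectation picks up a factor $b^{j'(\xi_0-\delta)}$ with $b^{-j'}\approx|s'-t'|$. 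Since $|s-t|=b^{-m}|s'-t'|$, what you actually need from the subtree is the full inequality again; the reduction is self-similar and the case you defer (``adjacent split cells'') \emph{is} the whole problem. The dichotomy you sketch for it also does not work as stated: on the event where the increment dominates the oscillation floor, the denominator is still the (possibly tiny) increment, so finiteness of \emph{positive} $\gamma$-moments contributes nothing — a reciprocal moment is needed there too — and on the complementary event the claimed ``decay of the probability of near-cancellation'' is never quantified.

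The paper's mechanism, which your sketch is missing, avoids all probability estimates of this kind. One chooses the unique level $j$ (with $b^{-(j+1)}\le|s-t|\le b^{-(j-1)}$) at which the cells of $s$ and $t$ cease to be equal or adjacent; then $[s,t]$ contains the \emph{entire} interval $I(s|_{j+1}^+)$, so $F_k(t)-F_k(s)=Q_k(s|_{j+1}^+)Z_k(s|_{j+1}^+)+\Delta_k$, where the total mass $Z_k(s|_{j+1}^+)$ is independent of the $\sigma$-field $\mathcal{F}(s|_{j+1}^+)$ that carries $Q(s|_n)$, $Q(t|_n)$, $Q_k(s|_{j+1}^+)$ and $\Delta_k$. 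Conditioning on $\mathcal{F}(s|_{j+1}^+)$ and using that $Z_k$ has a bounded density gives $\mathbb{E}\big(|AZ_k+B|^{-\gamma}\big)\le C_\gamma|A|^{-\gamma}$ \emph{uniformly in} $B$ (Lemma \ref{condiexpec2}, proved via characteristic-function decay from the fixed-point equation \eqref{WZ} under (A2) — an ingredient your argument also needs but never establishes); the surviving $|Q_k(s|_{j+1}^+)|^{-\gamma}$ then cancels against the spine part of $Q(s|_n)Q(t|_n)$, leaving $b^{(j+1)(\xi_0-\delta)}\mathbb{E}(|W_k|^{-\gamma})\le C|s-t|^{-(\xi_0-\delta)}$. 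Two further points: for $\gamma>1$ your plan to ``lower bound the norm by a weighted combination'' and reduce to one-dimensional bounds cannot work, because $|u|^{-\gamma}$ is not locally integrable on $\mathbb{R}$ when $\gamma\ge1$; one must use the \emph{joint} bounded density of $(Z_k,Z_l)$ (Lemma \ref{condiexpec2}(ii), where $\mathbb{P}(W_1=W_2)<1$ enters through a genuinely two-dimensional characteristic-function argument) so that $(u^2+v^2)^{-\gamma/2}$ is integrated in the plane, where it is locally integrable for $\gamma<2$. And your bookkeeping with $p=n-m$ silently assumes $m<n$; the regime where $s,t$ share their level-$n$ cell (the paper's case $j\ge n$) must be treated separately, with the oscillation floor and $\mathbb{E}(X_k^{-\gamma})<\infty$ from \eqref{momcon}.
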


By using Fubini's theorem, Proposition~\ref{prop1} yields that for any
$n\geq1$,
%
\begin{equation}\label{finite}
\mathbb{E} \biggl(\iint_{s,t\in[0,1]} \frac{ d\mu_n(s)\,d\mu_n(t)}{
\mathcal{K}_n^d(s,t)}  \biggr) \leq2C \iint_{s,t\in[0,1]} \frac
{d\mu_0(s)\,d\mu_0(t)}{|s-t|^{\xi_0-\delta}} <\infty.
\end{equation}

For any $s,t\in[0,1]$ one has
\[
\mathcal{K}_n^d(s,t) \leq\sup_{s,t\in[0,1]} |F_k(s)-F_k(t)|^d =X_k^d,
\]
so \eqref{finite} implies
%
\begin{equation}\label{exinfty}
\sup_{n\geq1} \mathbb{E}\bigl(X_k^{-d}\cdot\mu_n([0,1])^2\bigr) <\infty.
\end{equation}
Notice that for $d\in(0,1)$ we have
\begin{eqnarray*}
\mathbb{E}\bigl(\mu_n([0,1])^{\fracc{2}{1+d}}\bigr)
& = & \mathbb{E}\bigl(X_k^{\fracc{d}{1+d}}\cdot X_k^{-\fracc{d}{1+d}} \cdot\mu
_n([0,1])^{\fracc{2}{1+d}}\bigr) \\
& \leq& \mathbb{E}(X_k)^{\fracc{d}{1+d}} \cdot\mathbb
{E}\bigl(X_k^{-d}\cdot\mu_n([0,1])^2\bigr)^{\fracc{1}{1+d}},
\end{eqnarray*}
and for $d\in(1,2)$ we have for any $\epsilon>0$,
\begin{eqnarray*}
\mathbb{E}\big(\mu_n([0,1])^{1+\epsilon}\big)&=&
\mathbb{E}\big(X_k^{d(1+\epsilon)/2}\cdot X_k^{-d(1+\epsilon)/2} \cdot \mu_n([0,1])^{1+\epsilon}\big) \\
&\leq& \mathbb{E}\big(X_k^{d(1+\epsilon)/(1-\epsilon)}\big)^{(1-\epsilon)/2} \cdot \mathbb{E}\big(X_k^{-d}\cdot \mu_n([0,1])^2\big)^{(1+\epsilon)/2}.
\end{eqnarray*}
Thus by using the corresponding martingale convergence theorem we get
from \eqref{exinfty} that $\mathbb{E}(\mu([0,1]))=1$. Then by using the
same tail event argument as in~\cite{BeSc09} we can get $\mathbb{P}(\mu
([0,1])>0)=1$.

Due to the fact that almost surely $\mu_n$ converges weakly to $\mu$
and $\mathcal{K}^d_n$ converges uniformly to $\mathcal{K}^d$, we get
from \eqref{finite} that
\[
\mathbb{E} \biggl(\iint_{s,t\in[0,1]} \frac{d\mu(s)\,d\mu
(t)}{\mathcal{K}^d(s,t)} \biggr) \leq\liminf_{n\to\infty}\mathbb
{E} \biggl(\iint_{s,t\in[0,1]} \frac{ d\mu_n(s)\,d\mu_n(t)
}{\mathcal{K}_n^d(s,t)} \biggr) <\infty.
\]
Since almost surely $\mu$ is carried by $K$ and $\mu(K)>0$, by using
the mass distribution principle we get the desired lower bound.\vadjust{\goodbreak}

For the case $\mathbb{P}(W_1= W_2)=1$, it is the same proof as above
when $\xi_0\in(0,\xi_*]$. When $\xi_0\in(\xi_*,1]$, we may take $d\in
(0,1)$ such that $\mathbb{E}(|W_k|^{d})=b^{-(\xi_*-\delta)}$ and for
$w\in\mathscr{A}_*$ let
\[
\widetilde{W}(w)=b^{\xi_*-\delta}\cdot|W_k(w)|^{d}.
\]
Then the same procedure as the case $\xi_0\in(0,\xi_*]$ will yield a
lower bound $d$, which can be arbitrarily close to $1$, thus the
conclusion.\vspace*{-2pt}

\subsection{\texorpdfstring{Proof of Proposition \protect\ref{prop1}}{Proof of Proposition 1}}

Recall that $Z_k=F_k(1)$. We will frequently use the following lemma,
whose proof will be given in the Section~\ref{prooflemma2}.\vspace*{-2pt}

\begin{lemma}\label{condiexpec2}
{\smallskipamount=0pt
\begin{longlist}[(ii)]
\item[(i)] For any $d\in(0,1)$ there exists a constant $C_d$ such that
for any constants $A,B\in\mathbb{R}$ with $A\neq0$, one has
\[
\mathbb{E}(|A Z_k+B|^{-d}) \leq C_d \cdot|A|^{-d}.
\]
\item[(ii)] If $\mathbb{P}(W_1= W_2)<1$, then for any $d\in(1,2)$ there
exists a constant $C_d$ such that for any constants $A_1,A_2,B_1,B_2
\in\mathbb{R}$ with $A_1A_2\neq0$, one has
\[
\mathbb{E} \bigl( (|A_1 Z_k+B_1|^2+|A_2 Z_l+B_2|^2)^{-d/2} \bigr) \leq
C_d \cdot|A_1|^{-1}\cdot|A_2|^{-d+1}.\vspace*{-2pt}
\]
\end{longlist}}
\end{lemma}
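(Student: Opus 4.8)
The plan is to reduce both parts to the single fact that the fixed-point variables $Z_k=F_k(1)$ carry bounded densities, after which everything follows from elementary integral estimates. Evaluating the self-similarity \eqref{selfsimilar} at $t=1$ and summing over the first-generation cells gives the smoothing equation $Z_k=\sum_{i=0}^{b-1}W_k(i)\,Z_k^{(i)}$, where the $Z_k^{(i)}$ are i.i.d.\ copies of $Z_k$ independent of $(W_k(i))_i$, and jointly $(Z_1,Z_2)=\sum_{i=0}^{b-1}\big(W_1(i)Z_1^{(i)},\,W_2(i)Z_2^{(i)}\big)$ with $(Z_1^{(i)},Z_2^{(i)})$ i.i.d.\ copies of $(Z_1,Z_2)$. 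Since $|Z_k|\le X_k$, assumption (A1) together with \eqref{momcon} gives a moment $\mathbb{E}(|Z_k|^q)\le\mathbb{E}(X_k^q)<\infty$ of some order $q\in(1,2]$, so each characteristic function $\psi_k$ is non-degenerate with a quantitative cusp $|\psi_k(u)|\le 1-c|u|^{q}$ near the origin.

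For part (i), homogeneity pulls out $|A|^{-\gamma}$ and reduces the claim to $\sup_{c\in\mathbb{R}}\mathbb{E}(|Z_k+c|^{-\gamma})<\infty$. Assuming $Z_k$ has a bounded density $\rho_k$, I split $\int|z+c|^{-\gamma}\rho_k(z)\,dz$ at $|z+c|=1$: on $|z+c|<1$ the bound $\rho_k\le\|\rho_k\|_\infty$ together with $\gamma<1$ makes $\int_{|y|<1}|y|^{-\gamma}\,dy$ finite, and on $|z+c|\ge 1$ the integral is at most $\int\rho_k=1$; both estimates are uniform in $c$.

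For part (ii) the key point is that it reduces to a part-(i)-type estimate for the \emph{other} coordinate. Writing $v=A_2z_l+B_2$ and integrating first in $z_k$, the substitution $u=A_1z_k+B_1$ and the identity $\int_{\mathbb{R}}(u^2+v^2)^{-\gamma/2}\,du=c_\gamma|v|^{1-\gamma}$, finite precisely because $\gamma>1$, give, with $\bar\rho(z_l):=\sup_{z_k}\rho(z_k,z_l)$ for the joint density $\rho$ of $(Z_k,Z_l)$,
\[
\mathbb{E}\big((|A_1Z_k+B_1|^2+|A_2Z_l+B_2|^2)^{-\gamma/2}\big)\le\frac{c_\gamma}{|A_1|}\int\bar\rho(z_l)\,|A_2z_l+B_2|^{-(\gamma-1)}\,dz_l.
\]
Because $\gamma-1\in(0,1)$, the remaining integral is of exactly the type handled in part (i), now with the bounded finite measure $\bar\rho(z_l)\,dz_l$ in place of the law of $Z_l$, so it is at most $C|A_2|^{-(\gamma-1)}$ provided $\bar\rho$ is bounded and integrable. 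This produces the asymmetric bound $|A_1|^{-1}|A_2|^{-(\gamma-1)}$ and transparently explains its asymmetry.

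It remains to supply the density facts, which is the crux. I would establish $\psi_k\in L^1(\mathbb{R})$ and, jointly, $\psi\in L^1(\mathbb{R}^2)$ with $\partial^2_{\theta_l}\psi\in L^1(\mathbb{R}^2)$; Fourier inversion then yields bounded $\rho_k$ and $\rho$, and the far-field decay $\sup_{z_k}\rho(z_k,z_l)=O(z_l^{-2})$ that makes $\bar\rho$ integrable, the decay coming from the finite positive moments of $Z_l$. The decay of the characteristic functions is obtained by iterating the functional equation to depth $N$, $\psi_k(\theta)=\mathbb{E}\big[\prod_{|w|=N}\psi_k(\theta\,Q_k(w))\big]$, and showing that for large $|\theta|$ a positive fraction of the $b^N$ rescaled arguments $\theta\,Q_k(w)$ lands where $|\psi_k|<1$, so the product decays; assumption (A2), by controlling $\mathbb{P}(|W_k|\text{ small})$, keeps the products $Q_k(w)$ typically non-negligible, and the near-origin cusp converts this into a genuine polynomial rate. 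In two dimensions the hypothesis $\mathbb{P}(W_1=W_2)<1$ enters exactly to keep $(Z_1,Z_2)$ off the diagonal, forcing $\psi$ to decay in every direction rather than only transversally to $\{z_1=z_2\}$. I expect this last step, namely turning the multiplicative functional equation into quantitative $L^1$-decay and, harder still, into directionally uniform decay of the joint $\psi$ from the bare non-degeneracy $\mathbb{P}(W_1=W_2)<1$, to be the main obstacle; the reductions above are routine once the density estimates are available.
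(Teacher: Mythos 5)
Your overall architecture coincides with the paper's (fixed-point equation $\Rightarrow$ polynomial decay of characteristic functions $\Rightarrow$ $L^1$ $\Rightarrow$ bounded densities $\Rightarrow$ elementary integral estimates), and your part (i) is essentially identical to the paper's. But part (ii) has two genuine gaps. The first is structural: by integrating out $z_k$ first, your reduction needs the sup-slice $\bar\rho(z_l)=\sup_{z_k}\rho(z_k,z_l)$ to be \emph{integrable}, which does not follow from boundedness of $\rho$ (i.e.\ from $\psi\in L^1(\mathbb{R}^2)$). You propose to obtain it from $\partial^2_{\theta_l}\psi\in L^1(\mathbb{R}^2)$, which amounts to requiring $\mathbb{E}(Z_l^2)<\infty$; but assumption (A1) only guarantees a moment of some order $q\in(1,2]$, possibly strictly less than $2$ (and even finiteness of $\mathbb{E}(W^2)$ would not give $\mathbb{E}(X^2)<\infty$ unless $\mathbb{E}(|W|^2)<b^{-1}$), so this step fails in general. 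The gap is self-inflicted: the paper instead factors out $|A_2|^{-\gamma}$, performs the joint affine substitution $u=\frac{A_1}{A_2}x+\frac{B_1}{A_2}$, $v=y+\frac{B_2}{A_2}$ (which produces the Jacobian factor $|A_2|/|A_1|$, hence the asymmetric bound), and splits the resulting integral at the unit disc: inside, $\iint_{u^2+v^2\le 1}(u^2+v^2)^{-\gamma/2}\,\mathrm{d}u\,\mathrm{d}v<\infty$ since $\gamma<2$ and $\rho\le\|\rho\|_\infty$; outside, the integrand is at most $\rho$, of total mass $1$. This needs only boundedness of the joint density, so you should replace your slice argument by this computation.

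The second gap is the essential one: the entire analytic content of part (ii) is the direction-uniform polynomial decay of the joint characteristic function under the bare hypothesis $\mathbb{P}(W_1=W_2)<1$, and this is precisely the step you label ``the main obstacle'' and leave open, so the proposal does not amount to a proof. For the record, the paper's ``non-trivial modification'' of Liu's argument runs as follows. Set $\psi(r)=\sup_{\theta}|\varphi(r\cos\theta,r\sin\theta)|$; the fixed-point equation gives $\psi(r)\le\mathbb{E}\big(\psi(r\widetilde W)\big)^b$ with $\widetilde W=\sqrt{W_k^2+W_l^2}$, and the crux is to show $\psi(r)<1$ for every $r>0$. If $|\varphi(r_n\cos\theta_n,r_n\sin\theta_n)|=1$ along some $r_n\to 0$, then on a common full-measure set all pairwise differences satisfy $\cos\theta_n\,(Z_k(\omega)-Z_k(\omega'))+\sin\theta_n\,(Z_l(\omega)-Z_l(\omega'))=0$ for $n$ large, hence $Z_k-cZ_l$ is almost surely constant for some $c\neq 0$; feeding this back into $Z_k=\sum_j W_k(j)Z_k(j)$ and $Z_l=\sum_j W_l(j)Z_l(j)$ and using independence of the $b$ cells forces $W_k(j)Z_k(j)-cW_l(j)Z_l(j)\equiv 0$ for each $j$, whence $c=1$ and $W_k=W_l$ almost surely, contradicting $\mathbb{P}(W_1=W_2)<1$; a scaling argument (a smallest $h>0$ with $\psi(h)=1$ is incompatible with $\psi(h)\le\mathbb{E}(\psi(h\widetilde W))^b$ and the fact that $\widetilde W$ is not almost surely $\ge 1$) then yields $\psi<1$ everywhere, and Liu's iteration converts this, together with (A2), into $\psi(r)=O(r^{-s})$ for some $s>2$, hence $\psi\in L^1(\mathbb{R}^2)$. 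Any complete write-up of your proposal must either reproduce this argument or replace it by an equivalent one; without it, what remains is only the routine calculus surrounding the lemma's real content.
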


For $n\geq1$ and $w\in\mathscr{A}^n\setminus\{b-1\cdots b-1\}$
denote by $w^+$ the unique word in~$\mathscr{A}^n$ such that $\pi
(w^+)=\pi(w)+b^{-n}$.

Since $s<t$, there exists a unique $j\geq0$ such that
$s|_j^+=t|_j$ and $s|_{j+1}^+\neq t|_{j+1}$. This implies $\pi
(s|_{j+1}^{+})+b^{-j-1}\leq t$ and
\[
b^{-(j+1)}\leq|s-t| \leq2b^{-j} \leq b^{-(j-1)}.
\]
Notice that one has either $s_{j+1}\in\{0,\ldots , b-2\}$ or
$s_{j+1}=b-1$. Without loss of generality we may assume $s_{j+1}\in\{
0,\ldots , b-2\}$ thus $s|_{j+1}^+=s|_j \cdot r$ for $r= s_{j+1}+1\in\{
1,\ldots , b-1\}$.

Recall the definition of $\mathcal{K}_n^d$ in \eqref{kgammale1} and
\eqref{kgammage1}. We have the following two situations.\vspace*{-2pt}

\subsubsection{When $d<1$} \label{gamma<1}

\mbox{}

(i) If $j\geq n$, then
\begin{eqnarray*}
\frac{d\mu_n(s)\,d\mu_n(t)}{\mathcal{K}_n^d(s,t)}
&\leq&
O_k(s|_n)^{-d}\cdot Q(s|_n) \cdot Q(t|_n)\, d\mu_0(s)\,d\mu_0(t)\\
&=& b^{n(\xi_0-\delta)}\cdot X_k(s|_n)^{-d} \cdot|Q(t|_n)| \, d\mu
_0(s)\,d\mu_0(t).
\end{eqnarray*}
Since $X_k(s|_n)$ and $Q(t|_n)$ are independent, we get
\begin{eqnarray*}
\mathbb{E} \biggl(\frac{d\mu_n(s)\,d\mu_n(t) }{\mathcal
{K}_n^d(s,t)} \biggr)
&\leq& b^{n(\xi_0-\delta)}\cdot\mathbb
{E}(X_k^{-d})\, d\mu_0(s)\,d\mu_0(t) \\
&\leq& b^{\xi_0-\delta}\cdot\mathbb{E}(X_k^{-d}) \cdot b^{(j-1)(\xi
_0-\delta)}\, d\mu_0(s)\,d\mu_0(t)\\
&\leq& b^{\xi_0-\delta}\cdot\mathbb{E}(X_k^{-d}) \cdot\frac{d\mu_0(s)\,d\mu_0(t)}{|s-t|^{\xi_0-\delta}}.
\end{eqnarray*}

(ii) If $j\leq n-1$, then
\begin{eqnarray*}
\mathcal{K}_n^d(s,t)^{-1}
&\leq& |F_k(s)-F_k(t)|^{-d} \\
&=&\bigl|Q_k(s|_{j+1}^+)\cdot Z_k(s|_{j+1}^+)+\Delta_k\bigr|^{-d},
\end{eqnarray*}
where $\Delta_k=F_k(t)-F_k(\pi
(s|_{j+1}^{+})+b^{-j-1})+F_k(s|_{j+1}^+)-F_k(s)$. Notice that
$Z_k(s|_{j+1}^+)$ is independent of $Q(s|_n)$, $Q(t|_n)$,
$Q_k(s|_{j+1}^+)$ and $\Delta_k$. Let
%
\begin{equation}\label{sigmasj}
\mathcal{A}(s|_{j+1}^+)=\sigma \bigl(W(w)\dvtx  |w|\leq j+1
\mbox{ or }
w|_{j+1}\neq s|_{j+1}^+ \bigr).
\end{equation}
From Lemma~\ref{condiexpec2}(i) we get
\begin{eqnarray*}
&&\mathbb{E} \biggl(\frac{d\mu_n(s)\,d\mu_n(t)}{\mathcal
{K}_n^d(s,t)} \Bigm| \mathcal{A}(s|_{j+1}^+) \biggr) \\
&& \qquad \leq C_d \cdot\bigl|Q_k(s|_{j}\cdot r)\bigr|^{-d} \cdot Q(s|_n) \cdot
Q(t|_n)\, d\mu_0(s)\,d\mu_0(t)\\
&& \qquad = C_d \cdot\bigl|W_k(s|_j\cdot r)\bigr|^{-d} \cdot b^{(j+1)(\xi_0-\delta
)}\cdot\prod_{i=j+1}^n \widetilde{W}(s|_i) \cdot Q(t|_n) \, d\mu
_0(s)\,d\mu_0(t).
\end{eqnarray*}
Since all the random variables in the above products are independent,
we get
\begin{eqnarray*}
\mathbb{E} \biggl(\frac{d\mu_n(s)\,d\mu_n(t)}{\mathcal{K}_n^d(s,t)}
 \biggr) &\leq& C_d\cdot\mathbb{E}(|W_k|^{-d}) \cdot b^{(j+1)(\xi
_0-\delta)} \, d\mu_0(s)\,d\mu_0(t) \\
&\leq& C_d\cdot b^{2(\xi_0-\delta)}\cdot\mathbb{E}(|W_k|^{-d})
\cdot\frac{ d\mu_0(s)\,d\mu_0(t)}{|s-t|^{\xi_0-\delta}}.
\end{eqnarray*}

\subsubsection{When $d>1$}\label{gamma>1}

\mbox{}

(i) If $j\geq n$, then
\begin{eqnarray*}
\frac{ d\mu_n(s)\,d\mu_n(t)}{\mathcal{K}_n^d(s,t)} &\leq&
\frac{Q(s|_n) \cdot Q(t|_n) \, d\mu_0(s)\,d\mu
_0(t)}{(O_k(s|_n)^2+O_l(s|_n)^2)^{d/2}} \\
&\leq& \frac{Q(s|_n) \cdot Q(t|_n) \, d\mu_0(s)\,d\mu
_0(t)}{((Q_k(s|_n)\cdot Z_k(s|_n))^2+(Q_l(s|_n)\cdot Z_l(s|_n))^2)^{d/2}}.
\end{eqnarray*}
Let $\mathcal{A}_n=\sigma(W(w)\dvtx  |w| \leq n)$. From Lemma \ref
{condiexpec2}(ii) we get
\begin{eqnarray*}
&&\mathbb{E} \biggl( \frac{d\mu_n(s)\,d\mu_n(t)}{\mathcal
{K}_n^d(s,t)} \Bigm| \mathcal{A}_n  \biggr) \\
&& \qquad \leq C_d\cdot\bigl(|Q_k(s|_n)|\cdot|Q_l(s|_n)|^{d-1}\bigr)^{-1} \cdot
Q(s|_n) \cdot Q(t|_n) \, d\mu_0(s)\,d\mu_0(t) \\
&& \qquad =C_d\cdot b^{n(\xi_0-\delta)} \cdot Q(t|_n)\, d\mu_0(s)\,d\mu_0(t).
\end{eqnarray*}
This implies
\begin{eqnarray*}
\mathbb{E} \biggl(\frac{d\mu_n(s)\,d\mu_n(t)}{\mathcal{K}_n^d(s,t)}
 \biggr)
& \leq& C_d\cdot b^{n(\xi_0-\delta)}\, d\mu_0(s)\,d\mu_0(t)
\\
& \leq& C_d\cdot b^{\xi_0-\delta} \cdot\frac{ d\mu_0(s)\,d\mu_0(t)}{|s-t|^{\xi_0-\delta}}.
\end{eqnarray*}

(ii) If $j\leq n-1$, like in Section~\ref{gamma<1}(ii) one has
\begin{eqnarray*}
&&\mathcal{K}_n^d(s,t)^{-1} \\
& & \qquad \leq \bigl(|F_k(s)-F_k(t)|^{2}+|F_l(s)-F_l(t)|^{2}\bigr)^{-d/2} \\
&& \qquad  =  \bigl( \bigl|Q_k(s|_{j+1}^+)\cdot Z_k(s|_{j+1}^+)+\Delta
_k\bigr|^{2}+\bigl|Q_l(s|_{j+1}^+)\cdot Z_l(s|_{j+1}^+)+\Delta_l\bigr|^{2}\bigr)^{-d/2}.
\end{eqnarray*}
By using Lemma~\ref{condiexpec2}(ii) we get
\begin{eqnarray*}
&&\mathbb{E}\biggl (\frac{d\mu_n(s)\,d\mu_n(t)}{\mathcal
{K}_n^d(s,t)} \Bigm| \mathcal{A}(s|_{j+1}^+) \biggr) \\
&& \qquad \leq C_d \cdot\bigl(\bigl|Q_k(s|_{j}\cdot r)\bigr|\cdot\bigl|Q_l(s|_{j}\cdot
r)\bigr|^{d-1}\bigr)^{-1} \cdot Q(s|_n) \cdot Q(t|_n)\, d\mu_0(s)\,d\mu
_0(t) \\
&& \qquad = C_d \cdot Q(s|_{j}\cdot r)^{-1} \cdot Q(s|_n) \cdot Q(t|_n) \, d\mu_0(s)\,d\mu_0(t) \\
&& \qquad = C_d \cdot\widetilde{W}(s|_{j}\cdot r)^{-1} \cdot b^{(j+1)(\xi
_0-\delta)}\cdot\prod_{i=j+1}^{n} \widetilde{W}(s|_i) \cdot Q(t|_n) \,
d\mu_0(s)\,d\mu_0(t) .
\end{eqnarray*}
All the random variables in the above products are independent, so
\begin{eqnarray*}
\mathbb{E} \biggl(\frac{d\mu_n(s)\,d\mu_n(t)}{\mathcal{K}_n^d(s,t)}
 \biggr) &\leq& C_d \cdot\mathbb{E}(|W_k|^{-1}|W_l|^{1-d}) \cdot
b^{(j+1)(\xi_0-\delta)}\, d\mu_0(s)\,d\mu_0(t) \\
&\leq& C_d\cdot\mathbb{E}(|W_k|^{-1}|W_l|^{1-d}) b^{2(\xi_0-\delta
)} \cdot \frac{ d\mu_0(s)\,d\mu_0(t)}{|s-t|^{\xi_0-\delta}}.
\end{eqnarray*}

\subsubsection{Conclusion}
Let
\[
C=
\cases{\displaystyle
\max \bigl\{ b^{\xi_0-\delta} \mathbb{E}(X_k^{-d}), C_d b^{2(\xi_0-\delta
)}\mathbb{E}(|W_k|^{-d}) \bigr\},  &\quad  if  $\xi_0\in(0,\xi_*]$; \vspace*{2pt}\cr\displaystyle
\max \bigl\{ C_d b^{\xi_0-\delta}, C_d b^{2(\xi_0-\delta)} \mathbb
{E}(|W_k|^{-1}|W_l|^{1-d}) \bigr\},  &\quad  if  $\xi_0\in(\xi_*,1]$.
}
\]
Then we get the conclusion from Section~\ref{gamma<1} and~\ref{gamma>1}.

\subsection{\texorpdfstring{Proof of Lemma \protect\ref{condiexpec2}}{Proof of Lemma 2}}\label{prooflemma2}

\mbox{}

(i) Let $\varphi_k(x)=\mathbb{E}(e^{ixZ_k})$ be the characteristic
function of $Z_k$. From \eqref{selfsimilar} we have the following
functional equation:
%
\begin{equation}\label{WZ}
Z_k=\sum_{j=0}^{b-1}W_k(j)\cdot Z_k(j).
\end{equation}
This implies
\[
\varphi_k(x)=\mathbb{E} \Biggl(\prod_{j=0}^{b-1} \varphi_k\bigl(x\cdot W_k(j)\bigr)
 \Biggr).
\]
Notice that given $x\in\mathbb{R}$ one has $|\varphi_k(x)|=|\varphi
_k(-x)|=|\varphi_k(|x|)|$, so
\begin{eqnarray*}
|\varphi_k(x)| &\leq& \mathbb{E} \Biggl(\prod_{j=0}^{b-1}  \bigl|\varphi
_k\bigl(x\cdot W_k(j)\bigr)  \bigr| \Biggr) \\
&=& \mathbb{E}\Biggl (\prod_{j=0}^{b-1}  \bigl| \varphi_k\bigl(x\cdot
|W_k(j)|\bigr) \bigr|  \Biggr),
\end{eqnarray*}
which implies
%
\begin{equation}\label{varphik}
|\varphi_k(x)| \leq\mathbb{E}\bigl (  \bigl|\varphi_k(x\cdot
|W_k|) \bigr| \bigr)^b.
\end{equation}
Starting from \eqref{varphik} and following the proof of Theorem 2.1 in
\cite{Liu01} one can prove the following result:
\[
\mbox{If } \mathbb{E}(|W_k|^{-s})<\infty\mbox{ for some } s>0, \mbox{ then } |\varphi_k(x)|=O(|x|^{-s}) \mbox{ when } x\to\infty.
\]
Under assumption (A2) this result will imply that $\varphi_k\in
L^1(\mathbb{R})$, thus $Z_k$ has a bounded density function $f_k$ with
$\|f_k\|_\infty\leq C_k:=\int_\mathbb{R} |\varphi_k(x)|\,dx<\infty$. This gives us
\begin{eqnarray*}
\mathbb{E}( |A Z_k+B|^{-d}) &=& \int_\mathbb{R} \frac{f_k(x)}{|A x+
B|^d}\, dx \\
& = & |A|^{-d} \int_\mathbb{R} \frac{f_k(x)}{|x+ \fraca{B}{A}|^d}\, dx\\
&=& |A|^{-d} \int_\mathbb{R} \frac{f_k(u-\fraca{B}{A})}{|u|^d}\, du
\\
&=& |A|^{-d} \biggl( \int_{|u|> 1} \frac{f_k(u-\fraca{B}{A})}{|u|^d}\, du + \int_{|u|\leq1} \frac{f_k(u-\fraca{B}{A})}{|u|^d}\, du
\biggr)\\
& \leq& |A|^{-d} \cdot \biggl( 1+C_k \int_{|u|\leq1} \frac
{1}{|u|^d}\, du  \biggr).
\end{eqnarray*}

(ii) First we assume that $Z_k$ and $Z_l$ have a bounded joint density
function~$f$ with $\|f\|_\infty=C<\infty$, then
\begin{eqnarray*}
& &\mathbb{E} \bigl( (|A_1 Z_k+B_1|^2+|A_2 Z_l+B_2|^2)^{-d/2} \bigr) \\
&& \qquad  =  \iint\frac{f(x,y)}{(|A_1 x+B_1|^2+|A_2 y+B_2|^2)^{d/2}} \, dx\, dy\\
&& \qquad  =  |A_2|^{-d} \iint\frac{f(x,y)}{(|\fracd{A_1}{A_2} x+\fraca
{B_1}{A_2}|^2+|y+\fraca{B_2}{A_2}|^2)^{d/2}}\, dx\, dy \\
&& \qquad  \leq |A_2|^{-d} \frac{|A_2|}{|A_1|} \iint\frac{f(\fracd
{A_2}{A_1}u-\fraca{B_1}{A_1},v-\fraca{B_2}{A_2} )}{(u^2+v^2)^{d/2}}\,
du\, dv\\
&& \qquad \leq |A_1|^{-1}|A_2|^{-d+1} \biggl ( 1+C \iint
_{|u|^2+|v|^2\leq1} \frac{1}{(u^2+v^2)^{d/2}}\, du\,dv  \biggr),
\end{eqnarray*}
which gives us the conclusion. So it is enough to show that the
characteristic function
\[
\varphi\dvtx  (x,y)\in\mathbb{R}^2\mapsto\varphi(x,y)=\mathbb{E}\bigl(e^{i(xZ_k+yZ_l)}\bigr)
\]
is in $L^1(\mathbb{R}^2)$. For we consider the polar coordinates: for
$r \in\mathbb{R}_+$ and $\theta\in[0,2\pi)$ define
%
\begin{equation}\label{phirtheta}
\overline{\varphi}(r,\theta)=\varphi(r\cos\theta,r\sin\theta)=\mathbb
{E}\bigl(e^{i(r\cos\theta Z_k+r\sin\theta Z_l)}\bigr).
\end{equation}
Let $\psi(r)=\sup_{\theta\in[0,2\pi)} |\overline{\varphi}(r,\theta)|$.
Clearly $\psi(r) \leq1$, so it is enough to show that $\psi
(r)=O(r^{-s})$ for some $s>2$ when $r\to\infty$. This can be done by
using a~similar argument as in (i): from \eqref{phirtheta} and \eqref
{WZ} one has
\[
\overline{\varphi}(r,\theta)=\mathbb{E}\Biggl (\prod_{j=0}^{b-1} \overline
{\varphi} \bigl(r\cdot\overline{W}(j), \theta+\overline{\theta}(j)
\bigr) \Biggr),
\]
where $\overline{W}(j)=\sqrt{|W_k(j)|^2+|W_l(j)|^2}$ and $\overline
{\theta}_j=\arccos(W_k(j)/\overline{W}(j))$. This gives us
%
\begin{equation}\label{psi}
\psi(r) \leq\mathbb{E} \bigl(\psi(r\cdot\overline{W} ) \bigr)^b
 \qquad \mbox{where } \overline{W}=\sqrt{|W_k|^2+|W_l|^2}.
\end{equation}
Again, starting from inequality \eqref{psi} and following the proof of
Theorem 2.1 in~\cite{Liu01} (with a nontrivial modification which we
will present later), one can prove the following result:
%
\begin{equation}\label{r-s}\qquad
\mbox{If } \mathbb{E}(\overline{W}^{\,-s})<\infty\mbox{ for some } s>0,
\mbox{ then } \psi(r)=O(r^{-s}) \mbox{ when } r\to\infty.
\end{equation}
Then we can get the conclusion due to assumption (A2).

The nontrivial modification for proving \eqref{r-s} is the part that
proves \mbox{$\psi(r)<1$} holds for all $r>0$, the rest of the proof will
follow easily from the proof of Theorem 2.1 in~\cite{Liu01}. In order
to prove that $\psi(r)<1$ holds for all $r>0$, first we show that $\psi
(r)<1$ holds for all $r$ small enough.

Suppose that it is not the case. Then we can find sequences $r_n\to0$
and $\theta_n\in[0,2\pi)$ such that $|\overline{\varphi}(r_n,\theta
_n)|=1$, and thus there exists a subset $\Omega'\subset\Omega$ with
$\mathbb{P}(\Omega')=1$ and a sequence $\zeta_n\in[0,2\pi)$ such that
\[
r_n\cos\theta_n Z_k(\omega)+r_n\sin\theta_n Z_l(\omega)\in\zeta_n +
2\pi\mathbb{Z}
\]
holds for all $n\geq1$ and $\omega\in\Omega'$. In other words,
for any $\omega,\omega' \in\Omega'$ one has
\[
r_n\cos\theta_n \bigl(Z_k(\omega)-Z_k(\omega')\bigr)+r_n\sin\theta_n \bigl(Z_l(\omega
)-Z_l(\omega')\bigr)\in2\pi\mathbb{Z} \qquad   \forall n\geq1.\vadjust{\goodbreak}
\]
From $r_n\to0$ one gets
\[
\cos\theta_n \bigl(Z_k(\omega)-Z_k(\omega')\bigr)+\sin\theta_n\bigl (Z_l(\omega
)-Z_l(\omega')\bigr)=0
\]
for all $n$ large enough. Since $\cos\theta_n$ and $\sin\theta_n$
cannot be equal to $0$ at the same time and $Z_k$, $Z_l$ are not almost
surely a constant, there exist a subset $\Omega''\subset\Omega'$ with
$\mathbb{P}(\Omega'')=1$ and a constant $c\neq0$ such that
\[
Z_k(\omega)-Z_k(\omega')=c\bigl(Z_l(\omega)-Z_l(\omega')\bigr)
\]
holds for all $\omega,\omega' \in\Omega''$. This implies that $Z_k-c
Z_l$ is a constant on $\Omega''$. In other words, $\sum_{j=0}^{b-1}
W_k(j)Z_k(j)-c W_l(j)Z_l(j)$ is almost surely a constant. But this
could happen only if $W_k(j)Z_k(j)-c W_l(j)Z_l(j)$ is almost surely
equal to $0$ for each $j=0,\ldots ,b-1$ (since they are i.i.d. random
variables). So we get $c=1$ and $W_k=W_l$ almost surely, which is
contradictory to the assumption $\mathbb{P}(W_1=W_2)<1$.

Now suppose that there exists an $h>0$ such that $\psi(h)=1$, and we
may assume that $\psi(r)<1$ holds for all $0<r<h$. From \eqref{psi} we get
\[
1=\psi(h)\leq\mathbb{E} \bigl(\psi(h\cdot\overline{W} ) \bigr)^b
\leq1.
\]
This implies that almost surely $\psi (h\cdot\overline{W} )=1$.
Due to (A1) there exists $q\in(1,2]$ such that $\mathbb
{E}(|W_1|^{q})\vee\mathbb{E}(|W_2|^q)<b^{-1}$. Since $q/2<1$, by using
subadditivity of $x\rightarrow x^{q/2}$, we get that
\begin{eqnarray*}
\mathbb{P}(\overline{W} \geq1) &\leq& \mathbb
{E}\bigl((|W_k|^2+|W_l|^2)^{q/2}\bigr)\\
&\leq& \mathbb{E}(|W_k|^q+|W_l|^q)\\
&<&2b^{-1}\\
&\leq&1.
\end{eqnarray*}
Thus there exists $\delta<1$ such that $\psi(h\cdot\delta) =1$, which
is a contradiction.

\section*{Acknowledgments}

The author would like to gratefully thank the referee for his careful
reading of the original manuscript and for his many useful comments and
suggestions.

%

\printaddresses

\end{document}